\documentclass[10pt,a4paper,abstract,keyword]{scrartcl}%

\usepackage[utf8]{inputenc}
\usepackage[T1]{fontenc}
\usepackage[top=2cm, bottom=2cm, left=2cm, right=2cm]{geometry}
\usepackage[english]{babel}
\usepackage{authblk}
\usepackage{enumitem}
\usepackage{cite}

\usepackage{amssymb}
\usepackage{graphicx,url,amsthm}
\usepackage{siunitx}
\usepackage{pstricks}
\usepackage{pst-grad} 
\usepackage{pst-plot} 
\usepackage[babel]{csquotes}
\usepackage{amsmath}
\usepackage{array}
 \usepackage[caption=false,font=normalsize,labelfont=sf,textfont=sf]{subfig}

\usepackage{url}
\usepackage[babel]{csquotes}

\usepackage{flushend}
\hyphenation{op-tical net-works semi-conduc-tor}

\theoremstyle{plain}
\newtheorem{theorem}{Theorem}

\theoremstyle{remark}
\newtheorem{example}{Example}

\newcommand{\eps}{\varepsilon}
\newcommand{\f}{\frac}
\newcommand{\pa}{\partial}

\newcommand{\Omg}{\Omega}

\newcommand{\alp}{\alpha}
\newcommand{\lap}{\Delta}
\newcommand{\xh}{\hat{\mathbf{x}}}
\newcommand{\fd}{\hat{\mathbf{d}}}
\newcommand{\fr}{\mathbf{r}}
\newcommand{\fx}{\mathbf{x}}

\newcommand{\fD}{\mathbf{D}}

\newcommand{\fz}{\mathbf{z}}
\newcommand{\bke}[1]{\left( #1 \right)}
\newcommand{\bkc}[1]{\langle #1 \rangle}

\newcommand{\norm}[1]{\left\Vert #1 \right\Vert}
\newcommand{\abs}[1]{\left| #1 \right|}

\renewcommand{\vec}[1]{\mathbf{#1}}
\newcommand{\e}{\mathrm{e}}

\DeclareMathOperator{\BesselJ}{J}
\newcommand{\ci}{\mathrm{i}}

\title{Analysis and improvement of direct sampling method in the mono-static configuration}
\author[1]{S.~Kang}
\author[1]{M.~Lambert}
\author[2]{W.-K. ~Park}
\affil[1]{GeePs $|$ Group of electrical engineering - Paris, CNRS, CentraleSupélec, Univ. Paris-Sud, Université Paris-Saclay, Sorbonne Université, 
	3 \& 11 rue Joliot-Curie, Plateau de Moulon 91192 Gif-sur-Yvette CEDEX, France}
\affil[2]{Department of Information Security, Cryptology, and Mathematics, Kookmin University, Seoul, 02707, Korea.}

\date{}

\begin{document}

%
%
%
\maketitle

\begin{abstract}
The recently introduced non-iterative imaging method entitled \enquote{direct sampling method} (DSM) is known to be fast, robust, and effective for inverse scattering problems in the multi-static configuration but fails when applied to the mono-static one. To the best of our knowledge no explanation of this failure has  been provided yet. Thanks to the framework of the asymptotic and the far-field hypothesis in the 2D scalar configuration an analytical expression of the DSM indicator function in terms of the Bessel function of order zero and sizes, shapes and permittivities of the inhomogeneities is obtained and the theoretical reason of the limitation identified.  A modified version of DSM is then proposed in order to improve the imaging method. The  theoretical results are supported by numerical results using synthetic data.
\paragraph{keywords:} Non-iterative imaging method, direct sampling method, mono-static configuration, Bessel function, numerical results
\end{abstract}

\section{Introduction}
%
%
%
%
The 2D inverse scattering problem is an important topic due to potential applications  in modern human life, e.g., biomedical imaging \cite{A1,CZBN,SOJHCK}, non-destructive evaluation \cite{NDT_review,PGBM,VS}, synthetic aperture radar (SAR) imaging \cite{cetin2014sparsity,zhang2010resolution,zhang2015ofdm,LSM_gpr}, ground penetrating radar (GPR) \cite{ConstructionAndBuildingMaterials-2016,KM_gpr,SS}. However, because of its inherent non-linearity and ill-posedness, it is difficult to solve. Among the various imaging methods, non-iterative-type algorithms are of interest due to expected numerical simplicity and low computational cost, for example, MUltiple SIgnal Classification (MUSIC), linear sampling method (LSM), topological derivative, Kirchhoff migration, direct sampling method (DSM), etc. Related works can be found in \cite{KM_crack,Ammari_music,Kisch_lsm2d,dsm2d_farfield,LR,Park} and references therein.
Even though these methods can provide good results with multi-static data, they may fail with mono-static ones the due to lack of information arising to great assumption  from  inherent limitation. However, since the mono-static configuration is encountered in various applications such as GPR, SAR,  deep understanding and development of effective algorithms is needed. 

In the present work, we focus only onto DSM in the mono-static configuration because of its wide applicability, and various advantages like (i) it only needs  a few (e.g., one or two) incident fields, and (ii) it does not need any additional operation (singular value decomposition, defining an orthogonal projection operator and solving ill-posed linear integral equations, etc.). We refer to \cite{dsm2d_ito1, dsm2d_farfield} for  details. Though, a new intuitive indicator function of DSM in the mono-static configuration has already been proposed in \cite{bektas2016direct}, no theoretical explanation has been given yet to explain the failure of the classical DSM approach in such a configuration. Recently, in \cite{InverseProblems-Kang-2017}, the authors have investigated the mathematical structure of the DSM indicator function in the multi-static configuration using near-field data, proposed an improved version and confirmed its link with the classical Kirchhoff migration technique. Following a similar path but under the far-field hypothesis the mathematical structure of the indicator function of DSM based on the asymptotic formula of the scattered fields is proposed here and the limitation of traditional DSM in the mono-static configuration is identified. 
According to our analysis, a new indicator function of the direct sampling method  is introduced and analyzed in order to improve the imaging performance of DSM in this mono-static configuration.

In Section \ref{sec:2}, the 2D direct scattering problem and its far-field pattern are presented. The traditional DSM with far-field pattern is reminded in Section \ref{sec:3}. Section \ref{sec:4} is dedicated to the mono-static configuration,  the mathematical structure of DSM being outlined and the modified DSM (MDSM) proposed. Numerical simulations illustrating our theoretical results are presented in Section \ref{sec:5}. Conclusions and perspectives follow in Section \ref{sec:6}.

\section{Two-dimensional direct scattering problem and far-field pattern}\label{sec:2}
In this section, the two-dimensional direct scattering problem is sketched in the presence of a set of small dielectric inhomogeneities (Fig.~\ref{Location1-1}). We denote $\tau_m$ a small dielectric inhomogeneity defined as $\tau_m = \fr_m + \alp_m\fD_m$, where $\fr_m$ is the location of $\tau_m$, $\fD_m$ is a simply connected domain with smooth boundary and $\alpha_m$ characterizes its size (Fig.~\ref{Location1-2}). We denote $\tau=\bigcup_{m}\tau_{m}$, $m=1,2,\cdots,M$  a collection of $\tau_m$ and $\Omega$ the region of interest (ROI) such that $\tau_m\subset\Omega$ for all $m$. We assume that $\tau_m$ are well-separated small balls with radius $\alpha_m$, i.e., there exists $d_0\in\mathbb{R}$ such that $
0< d_0 < |\fr_m-\fr_{m'}|$ for all $m\ne m'$, $m=1,2,\cdots,M$.

\begin{figure}
	\centering
	\subfloat[Scattering problem]{\label{Location1-1}\centering
		\begin{minipage}[c][4.5cm][c]{.45\linewidth}\centering
			\psscalebox{0.9 0.9} 
			{\centering\small
				\begin{pspicture}(0,-2.1918733)(4.345445,2.1918733)
				\definecolor{colour0}{rgb}{1.0,0.8,0.0}
				\rput[bl](3.9923842,0.009832429){$\hat{\vec{d}}_{1}$}
				\psframe[linecolor=black, linewidth=0.04, dimen=outer](2.9774606,0.81569976)(0.9346035,-1.2271574)
				\pscircle[linecolor=black, linewidth=0.04, fillstyle=solid,fillcolor=colour0, dimen=outer](1.6399606,0.38444978){0.21875}
				\pscircle[linecolor=black, linewidth=0.04, fillstyle=solid,fillcolor=colour0, dimen=outer](2.4962106,-0.23430023){0.2375}
				\pscircle[linecolor=black, linewidth=0.04, fillstyle=solid,fillcolor=colour0, dimen=outer](1.3249097,-0.8655502){0.21875}
				\rput[bl](1.5087106,0.27819976){$\tau_{1}$}
				\rput[bl](2.3462107,-0.35930023){$\tau_{2}$}
				\rput[bl](1.2013128,-0.9593002){$\tau_{3}$}
				\rput[bl](2.4587107,-1.1343002){$\Omega$}
				\psarc[linecolor=black, linewidth=0.01, linestyle=dashed, dash=0.17638889cm 0.10583334cm, dimen=outer](1.899272,-0.29284614){1.89375}{60.0}{320.0}
				\psline[linecolor=black, linewidth=0.04](3.5932333,-1.0465858)(3.1145358,-1.6876823)
				\psline[linecolor=black, linewidth=0.04](3.7225213,-1.1394589)(3.2438235,-1.7805554)
				\psline[linecolor=black, linewidth=0.04, arrowsize=0.05291667cm 3.96,arrowlength=1.63,arrowinset=0.0]{<-}(2.9945424,-1.001094)(3.8225176,-1.5839916)(3.8225176,-1.5839916)
				\psline[linecolor=black, linewidth=0.04, arrowsize=0.05291667cm 3.96,arrowlength=1.63,arrowinset=0.0]{<-}(3.8084974,-1.7836881)(2.9805224,-1.2007904)(2.9805224,-1.2007904)
				\psline[linecolor=black, linewidth=0.04](2.5774577,1.4218062)(3.2665553,1.0152346)
				\psline[linecolor=black, linewidth=0.04](2.6558068,1.5603781)(3.3449044,1.1538066)
				\psline[linecolor=black, linewidth=0.04, arrowsize=0.05291667cm 3.96,arrowlength=1.63,arrowinset=0.0]{<-}(2.596967,0.82170624)(3.086921,1.7078539)(3.086921,1.7078539)
				\psline[linecolor=black, linewidth=0.04, arrowsize=0.05291667cm 3.96,arrowlength=1.63,arrowinset=0.0]{<-}(3.2869625,1.7155086)(2.7970085,0.8293609)(2.7970085,0.8293609)
				\psline[linecolor=black, linewidth=0.04](3.6396587,0.20695473)(3.6230698,-0.5929709)
				\psline[linecolor=black, linewidth=0.04](3.798846,0.20659028)(3.7822573,-0.5933354)
				\psline[linecolor=black, linewidth=0.04, arrowsize=0.05291667cm 3.96,arrowlength=1.63,arrowinset=0.0]{<-}(3.1261642,-0.10421068)(4.138715,-0.09688591)(4.138715,-0.09688591)
				\psline[linecolor=black, linewidth=0.04, arrowsize=0.05291667cm 3.96,arrowlength=1.63,arrowinset=0.0]{<-}(4.2434444,-0.26749352)(3.2308936,-0.27481827)(3.2308936,-0.27481827)
				\rput[bl](4.0454454,-0.7942492){$\hat{\vec{x}}_{1}$}
				\rput[bl](2.6903434,1.7118733){$\hat{\vec{d}}_{2}$}
				\rput[bl](3.4209557,1.2179956){$\hat{\vec{x}}_{2}$}
				\rput[bl](3.5229964,-2.1166983){$\hat{\vec{x}}_{N}$}
				\rput[bl](3.8658535,-1.5044533){$\hat{\vec{d}}_{N}$}
				\end{pspicture}
			}
		\end{minipage}
	}
	\subfloat[Inhomogeneity $\tau_{m}$]{\label{Location1-2}\centering
		\begin{minipage}[c][4.5cm][c]{.45\linewidth}\centering
			\psscalebox{0.9 0.9} 
			{\centering\small
				\begin{pspicture}(0,-0.97)(2.52,0.97)
				\definecolor{colour0}{rgb}{1.0,0.8,0.0}
				\pscircle[linecolor=black, linewidth=0.04, fillstyle=solid,fillcolor=colour0, dimen=outer](0.97,0.0){0.97}
				\psline[linecolor=black, linewidth=0.04, linestyle=dashed, dash=0.17638889cm 0.10583334cm, arrowsize=0.01cm 4.0,arrowlength=1.4,arrowinset=0.0,dotsize=0.07055555cm 2.0]{<-*}(1.58,0.59)(0.98,0.01)(0.98,0.01)
				\rput[bl](0.78,-0.47){$\vec{r}_{m}$}
				\rput[bl](1.22,-0.05){$\alpha_m$}
				\rput[bl](2.02,-0.63){$\vec{D}_{m}$}
				\end{pspicture}
			}		
		\end{minipage}
	}
	\caption{\label{Location}Configuration of the scattering problem for $M=3$ (left) and sketch of the inhomogeneity $\tau_{m}$ (right).}
\end{figure}
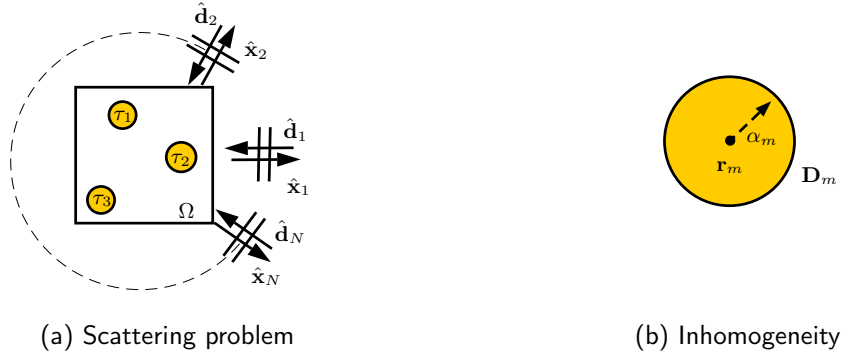

Here, we assume that all materials are non-magnetic ($\mu(\fx)\equiv\mu_0=\SI{1.256e-6}{\henry/\meter}$) and characterized by their dielectric permittivity at angular frequency $\omega=2\pi f$, $f$ being the frequency. Let us denote $\eps_m$ and $\eps_0$ the value of electrical permittivity of $\tau_m$ and $\mathbb{R}^2$, respectively. In so doing the following piecewise constant function can be introduced:
\[
\eps(\fx):=\left\{
\begin{array}{rcl}
\eps_{m}&\mbox{for}&\fx\in\tau_{m},\\
\eps_{0}&\mbox{for}&\mathbb{R}^{2}\backslash\overline{\tau}.
\end{array}
\right.\]
Let $k_0=\omega\sqrt{\eps_0\mu_0}=2\pi/\lambda$ be the wavenumber with positive wavelength $\lambda$, satisfying $\alpha_m\sqrt{\eps_{m}/\eps_{0}} \ll \lambda/2$ for all $m=1,2,\cdots,M$ (refer to \cite{SKL}).

In this contribution, we consider the plane-wave illumination: let $u^{i}(\fx)=\e^{\ci k\fd\cdot\fx}$, $\fx\in\mathbb{R}^2$ be an incident field with direction of propagation $\fd\in \mathbb{S}^{1}$, where $\mathbb{S}^{1}$ denotes the two-dimensional unit circle. Let $u(\fx,\fd)$ be the time-harmonic total field that satisfies the Helmholtz equation 
$$\lap u(\fx,\fd)+ \omega^2\mu_0\eps(\fx) u(\fx,\fd)=0$$
with transmission conditions at  boundaries $\pa\tau_m$. It is well-known that the total field can be written as the sum of the incident field $u^{i}(\fx,\fd)$ and the scattered field $u^{s}(\fx,\fd)$, where $u^{s}(\fx,\fd)$  satisfies the Sommerfeld radiation condition
\[
\lim_{|\fx|\to\infty}\sqrt{|\fx|}\left(\f{\pa u^{s}(\fx,\fd)}{\pa|\fx|}-\ci k_{0}u^{s}(\fx,\fd)\right)=0\]
uniformly into all directions $\xh=\fx\slash|\fx|$. We denote $u_{\infty}(\xh,\fd)$ the far-field pattern of $u^{s}(\fx,\fd)$ defined on $\mathbb{S}^{1}$ that satisfies
\[
u^{s}(\fx,\fd)=\frac{\e^{\ci k_{0}\fd\cdot\fx}}{\sqrt{|\fx|}}\left[u_{\infty}(\xh,\fd)+\mathcal{O}\left(\frac{1}{|\fx|}\right)\right]\]
uniformly into all directions $\xh=\fx\slash|\fx|$ and $|\fx|\longrightarrow\infty$. 
Based on \cite{AmmariKang}, the asymptotic expansion formula of $u_{\infty}(\xh,\fd)$ can be written as  .
\begin{equation}\label{asymptotic} u_{\infty}(\xh,\fd)=\frac{k_{0}^{2}(1+\ci)}{4\sqrt{k_{0}\pi}}\sum_{m=1}^{M}\alp_{m}^{2}\left(\frac{\eps_{m}-\eps_{0}}{\sqrt{\eps_{0}\mu_{0}}}\right)|\fD_{m}|\\
\mathrm{e}^{\ci k_{0}(\fd-\xh)\cdot\fr_{m}}
 + \mathcal{O}(\alp_{m}^{2})
\end{equation}
which plays a key role of the theoretical analysis of indicator function of DSM in mono-static configuration introduced in Section \ref{sec:4}

\section{Introduction of direct sampling method}\label{sec:3}
According to \cite{dsm2d_farfield}, the indicator function of the classical DSM with a set of measured far-field pattern data $\mathcal{F}=\{u_{\infty}(\xh_n,\fd): n=1,2,\cdots,N\}$ for a fixed incident direction $\fd$ is defined by 
\begin{equation}\label{DSM}	
	\mathcal{I}_{\mathrm{DSM}}(\fz,\fd):=\f{|\bkc{u_{\infty}(\xh_n,\fd),\e^{-\ci k_0\xh_{n}\cdot\fz}}_{L^{2}(\mathbb{S}^1)}|}{\|u_{\infty}(\xh_n,\fd)\|_{L^{2}(\mathbb{S}^1)}}
\end{equation}
where
\begin{align*}
\bkc{a(\xh_n),b(\xh_n)}_{L^{2}(\mathbb{S}^1)}&:=\sum_{n=1}^{N}a(\xh_n)\overline{b(\xh_n)}\\
\norm{a(\xh_n)}_{L^{2}(\mathbb{S}^1)}^2&:=\bkc{a(\xh_n),a(\xh_n)}_{L^{2}(\mathbb{S}^1)}.
\end{align*}
Based on \cite[Theorem 4.1]{InverseProblems-Kang-2017}, $\mathcal{I}_{\mathrm{DSM}}(\fz, \fd )$ can be represented by
\[\mathcal{I}_{\mathrm{DSM}}(\fz, \fd)=\frac{|\Psi_1(\fz, \fd)|}{\displaystyle\max_{\fz\in\Omega}|\Psi_1(\fz, \fd)|},\]
where
\begin{equation}\label{Structure_case1}
	\Psi_1(\fz, \fd)=\sum_{m=1}^{M}\alp_{m}^{2}(\eps_{m}-\eps_{0})\e^{\ci k_{0}\fd\cdot\fr_{m}}\BesselJ_{0}(k_0|\fz-\fr_{m}|).
\end{equation}
Here, $\BesselJ_{0}$ denotes the Bessel function of order zero of the first kind. Thanks to \eqref{Structure_case1} we can observe that $\mathcal{I}_{\mathrm{DSM}}(\fz)$ exhibits a maximum when $\fz=\fr_m$ and $0<\mathcal{I}_{\mathrm{DSM}}(\fz)<1$ at $\fz\notin\tau$ so that the location $\fr_m$ of $\tau_m$ can be identified.

In the multiple impinging case $\left(\fd_{l}, l=1,2,\cdots,L\right)$, $L$ being the number of incident directions, the indicator function of DSM is defined by 
\begin{equation}
\mathcal{I}_{\mathrm{DSM}}(\fz;k_{0}):=\max\{\mathcal{I}_{\mathrm{DSM}}(\fz;\fd_{1},k_{0}),\mathcal{I}_{\mathrm{DSM}}(\fz;\fd_{2},k_{0}),\cdots,\mathcal{I}_{\mathrm{DSM}}(\fz;\fd_{L},k_{0})\}\label{DSM2}
\end{equation}
Note that \eqref{DSM}  and \eqref{DSM2} are equivalent when $L=1$.

\section{Analysis and improvement of direct sampling method in mono-static configuration}\label{sec:4}
Let us now deal with the monostatic configuration  in which an antenna acts as receiver and transmitter, implying $\fd_{n}=-\xh_{n}$, and is moved from place to place giving a set of measured far-field pattern data defined by $\mathcal{M}=\{u_{\infty}(\xh_n,\fd_{n}): n=1,2,\cdots,N\}$   

As examplified in \cite{bektas2016direct}, the direct sampling method in such a configuration failed to provide a proper localisation of the defects (see also Fig. \ref{Result1-2}) when using the  indicator function $\mathcal{I}_{\mathrm{DSM}}^{\mathrm{mono}}(\fz)$ directly deduced from \eqref{DSM} and defined as
\begin{equation}\label{MonoDSM}	
	\mathcal{I}_{\mathrm{DSM}}^{\mathrm{mono}}(\fz):=\f{|\bkc{u_{\infty}(\xh_n,\fd_{n}),\e^{-\ci k_0\xh_{n}\cdot\fz}}_{L^{2}(\mathbb{S}^1)}|}{\|u_{\infty}(\xh_n,\fd_{n})\|_{L^{2}(\mathbb{S}^1)}}.
\end{equation}
In  \cite{bektas2016direct}, a modified indicator involving a heuristic factor is proposed to solve the problem, yet no theoretical explanation is provided. 
In the following the theoretical reason of this miss-localization is exhibited and a modified version of the DSM is introduced. Let us analyze the indicator function $\mathcal{I}_{\mathrm{DSM}}^{\mathrm{mono}}(\fz)$ to explain the inaccurate localization in the mono-static configuration.
\begin{theorem}\label{thm_DSM}
	Assume that the total number $N$ of incident and observation directions is sufficiently large. Then, $\mathcal{I}_{\mathrm{DSM}}^{\mathrm{mono}}(\fz)$ can be represented as:
	
	\[\mathcal{I}_{\mathrm{DSM}}^{\mathrm{mono}}(\fz)\approx\frac{|\Psi_{1}(\fz)|}{\displaystyle\max_{\fz\in\Omg}|\Psi_{1}(\fz)|},\]
	where
	\begin{equation}\label{result1}
		\Psi_{1}(\fz)=\sum_{m=1}^{M}\alp_{m}^{2}(\eps_{m}-\eps_{0})|\fD_{m}|\BesselJ_{0}(k_{0}|2\fr_{m}-\fz|).
	\end{equation}
\end{theorem}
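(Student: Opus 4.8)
The plan is to substitute the asymptotic expansion \eqref{asymptotic} into the numerator of \eqref{MonoDSM} and to use the mono-static constraint $\fd_n=-\xh_n$ to collapse everything onto a single scalar sum over the observation directions. First, putting $\fd_n=-\xh_n$ in \eqref{asymptotic} turns the phase $\e^{\ci k_0(\fd_n-\xh_n)\cdot\fr_m}$ into $\e^{-2\ci k_0\xh_n\cdot\fr_m}$, so that, discarding the $\mathcal{O}(\alp_m^2)$ remainder, $u_\infty(\xh_n,\fd_n)$ equals the $\fz$-independent constant $C=\frac{k_0^2(1+\ci)}{4\sqrt{k_0\pi}}$ times $\sum_{m}\alp_m^2\frac{\eps_m-\eps_0}{\sqrt{\eps_0\mu_0}}|\fD_m|\,\e^{-2\ci k_0\xh_n\cdot\fr_m}$. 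Pairing this against the conjugated test function $\overline{\e^{-\ci k_0\xh_n\cdot\fz}}=\e^{\ci k_0\xh_n\cdot\fz}$ and interchanging the two finite sums, the numerator becomes $C\sum_{m}\alp_m^2\frac{\eps_m-\eps_0}{\sqrt{\eps_0\mu_0}}|\fD_m|\sum_{n}\e^{\ci k_0\xh_n\cdot(\fz-2\fr_m)}$, which isolates the whole $\fz$-dependence in the directional sum $\sum_{n}\e^{\ci k_0\xh_n\cdot(\fz-2\fr_m)}$.

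Next I would exploit the hypothesis that $N$ is large. Assuming the directions $\xh_n$ become uniformly distributed on $\mathbb{S}^1$, the discrete sum is a Riemann sum and $\frac1N\sum_{n}\e^{\ci k_0\xh_n\cdot\fw}\to\frac{1}{2\pi}\int_{\mathbb{S}^1}\e^{\ci k_0\xh\cdot\fw}\,d\xh$ for each fixed $\fw$. Writing $\xh=(\cos\theta,\sin\theta)$ so that $\xh\cdot\fw=|\fw|\cos(\theta-\phi)$ and invoking the Bessel integral representation $\frac{1}{2\pi}\int_0^{2\pi}\e^{\ci t\cos\theta}\,d\theta=\BesselJ_0(t)$, this integral equals $\BesselJ_0(k_0|\fw|)$. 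Taking $\fw=\fz-2\fr_m$ and using $|\fz-2\fr_m|=|2\fr_m-\fz|$ gives $\sum_{n}\e^{\ci k_0\xh_n\cdot(\fz-2\fr_m)}\approx N\,\BesselJ_0(k_0|2\fr_m-\fz|)$, precisely the kernel of \eqref{result1}.

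It then remains to gather the constants. After these two approximations the numerator is $C\,N\,(\eps_0\mu_0)^{-1/2}$ times $\Psi_1(\fz)=\sum_{m}\alp_m^2(\eps_m-\eps_0)|\fD_m|\BesselJ_0(k_0|2\fr_m-\fz|)$, while the denominator $\|u_\infty(\xh_n,\fd_n)\|_{L^2(\mathbb{S}^1)}$ of \eqref{MonoDSM} is plainly independent of $\fz$. Hence $\mathcal{I}_{\mathrm{DSM}}^{\mathrm{mono}}(\fz)$ coincides, up to a $\fz$-independent positive factor, with $|\Psi_1(\fz)|$; normalising to unit maximum over $\fz\in\Omg$, the convention already adopted for \eqref{Structure_case1}, yields the stated formula.

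The main obstacle is turning the symbol $\approx$ into a controlled estimate, since it hides both the large-$N$ and the far-field/asymptotic approximations. Concretely one should (i) bound the Riemann-sum error $\left|\frac1N\sum_{n}\e^{\ci k_0\xh_n\cdot\fw}-\BesselJ_0(k_0|\fw|)\right|$ uniformly for $\fz$ in the bounded search region $\Omg$ with the $\fr_m$ fixed, and (ii) verify that the discarded $\mathcal{O}(\alp_m^2)$ terms in \eqref{asymptotic} perturb the numerator only at lower order and thus cannot displace the maxima of $|\Psi_1|$. The remaining steps are the routine bookkeeping of factoring out $\fz$-independent constants and recognising the Bessel integral.
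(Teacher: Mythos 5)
Your proposal is correct and follows essentially the same route as the paper's own proof: substitute the asymptotic expansion \eqref{asymptotic} with $\fd_n=-\xh_n$ so the phase becomes $\e^{-2\ci k_0\xh_n\cdot\fr_m}$, interchange the sums, and use the large-$N$ identification $\frac1N\sum_n\e^{-\ci k_0\xh_n\cdot\fw}\approx\BesselJ_0(k_0|\fw|)$ to obtain $\Psi_1$. The only cosmetic difference is at the end, where the paper invokes H\"older's inequality while you (more transparently) observe that the denominator is $\fz$-independent and renormalise; your closing remarks on controlling the Riemann-sum and $\mathcal{O}(\alp_m^2)$ errors go beyond what the paper makes explicit.
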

\begin{proof}
If $N$ is sufficiently large, the following relation holds for $\fz\in\mathbb{R}^2$ (see \cite{Park})
  \begin{equation}\label{RelationBessel}
 \dfrac{1}{N}\sum_{n=1}^{N}\e^{-\ci k_0\xh_{n}\cdot\fz}\approx\dfrac{1}{2 \pi}\int_{\mathbb{S}^{1}}\e^{-\ci k_0\xh\cdot\fz}d\xh=\BesselJ_{0}(k_0|\fz|).
  \end{equation}
Since $\fd_n=-\xh_n$, applying \eqref{asymptotic} and \eqref{RelationBessel} to \eqref{DSM}, we can evaluate
	\begin{align*}
	\bkc{u_{\infty}(\xh_n,\fd_n),\e^{-\ci k_0\xh_{n}\cdot\fz}}_{L^{2}(\mathbb{S}^1)}
	&\approx\f{k^{2}(1+\ci)}{4\sqrt{k\pi}}\sum_{m=1}^{M}\alp_{m}^{2}\left(\frac{\eps_{m}-\eps_{0}}{\sqrt{\eps_{0}\mu_{0}}}\right)\abs{\fD_{m}}\bke{\sum_{n=1}^{N}\e^{-\ci k\xh_{n}\cdot\bke{2\fr_{m}-\fz} }}\\
	&\approx \f{k^{2}(1+\ci)\pi}{2\sqrt{k\pi}}\sum_{m=1}^{M}\alp_{m}^{2}\left(\frac{\eps_{m}-\eps_{0}}{\sqrt{\eps_{0}\mu_{0}}}\right)\abs{\fD_{m}}\BesselJ_{0}(k_0|2\fr_{m}-\fz|).
	\end{align*}
Finally, applying H\"older's inequality
\begin{equation*}
|\bkc{u_{\infty}(\xh_n,\fd),\e^{-\ci k_0\xh_{n}\cdot\fz}}_{L^{2}(\mathbb{S}^1)}| \leq\|u_{\infty}(\xh_n,\fd)\|_{L^{2}(\mathbb{S}^1)}
\end{equation*}
	leads to \eqref{result1} which completes the proof.
\end{proof}

The structure of \eqref{result1} explains that DSM within the mono-static configuration is no longer proportional to $\abs{\BesselJ_{0}(k_{0}|\fr_{m}-\fz|)}$ but to $\abs{\BesselJ_{0}(k_{0}|2\fr_{m}-\fz|)}$. This means that $\mathcal{I}_{\mathrm{DSM}}^{\mathrm{mono}}(\fz)$ reaches its maximum value at shifted locations $\fz=2\fr_{m}$. Due to this reason, traditional application of DSM will lead to miss-localization of the inhomogeneities.

Thanks to \eqref{result1}, an alternative indicator function of DSM $\mathcal{I}_{\mathrm{MDSM}}^{\mathrm{mono}}(\fz)$ can be proposed: for $\fz\in\Omega$, 
\begin{equation}
\mathcal{I}_{\mathrm{MDSM}}^{\mathrm{mono}}(\fz):=\f{|\bkc{u_{\infty}(\xh_n,\fd_n),\e^{-2ik_0\xh_{n}\cdot\fz}}_{L^{2}(\mathbb{S}^1)}|}{\|u_{\infty}(\xh_n,\fd_n)\|_{L^{2}(\mathbb{S}^1)}}.
\label{MDSM}
\end{equation}
Following the same path (omitted here) than for Theorem \ref{thm_DSM} leads to
\begin{theorem}\label{thm_MDSM}
	Assume that the total number $N$ of incident and observation directions is sufficiently large. Then, $\mathcal{I}_{\mathrm{MDSM}}^{\mathrm{mono}}(\fz)$ can be represented as:
	\[\mathcal{I}_{\mathrm{MDSM}}^{\mathrm{mono}}(\fz)=\frac{|\Psi_{2}|}{\displaystyle\max_{\fz\in\Omg}|\Psi_{2}|},\]
	where
	\begin{equation}\label{MDSM}
		\Psi_{2}(\fz)=\sum_{m=1}^{M}\alp_{m}^{2}(\eps_{m}-\eps_{0})|\fD_{m}|\BesselJ_{0}(2k_{0}|\fr_{m}-\fz|).
	\end{equation}
\end{theorem}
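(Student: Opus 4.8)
The plan is to retrace the argument of Theorem~\ref{thm_DSM} essentially verbatim, the sole modification being the doubled exponent in the test function $\e^{-2\ci k_0\xh_n\cdot\fz}$ that defines the modified indicator \eqref{MDSM}. Accordingly, I would again invoke the three ingredients used there: the asymptotic far-field expansion \eqref{asymptotic}, the mono-static identity $\fd_n=-\xh_n$, and the discrete Bessel approximation \eqref{RelationBessel}.

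First I would substitute $\fd_n=-\xh_n$ into \eqref{asymptotic}, so that the phase factor $\e^{\ci k_0(\fd_n-\xh_n)\cdot\fr_m}$ collapses to $\e^{-2\ci k_0\xh_n\cdot\fr_m}$, exactly as in the proof of Theorem~\ref{thm_DSM}. Next I would form the inner product $\bkc{u_{\infty}(\xh_n,\fd_n),\e^{-2\ci k_0\xh_n\cdot\fz}}_{L^{2}(\mathbb{S}^1)}$; since $\bkc{\cdot,\cdot}$ conjugates its second argument, the test function contributes the factor $\e^{+2\ci k_0\xh_n\cdot\fz}$. Multiplying the two exponentials produces the combined phase $\e^{-2\ci k_0\xh_n\cdot(\fr_m-\fz)}$, and pulling the $m$-independent constant in front of the sum leaves the directional sum $\sum_{n=1}^{N}\e^{-2\ci k_0\xh_n\cdot(\fr_m-\fz)}$.

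The decisive step is then to apply \eqref{RelationBessel} with the replacement $\fz\mapsto 2(\fr_m-\fz)$ (equivalently, reading the exponent as the doubled wavenumber $2k_0$ acting on $\fr_m-\fz$), which yields $\tfrac1N\sum_{n}\e^{-2\ci k_0\xh_n\cdot(\fr_m-\fz)}\approx\BesselJ_0(2k_0|\fr_m-\fz|)$. This reproduces $\Psi_2(\fz)$ from \eqref{MDSM} up to the same $m$-independent prefactor encountered for Theorem~\ref{thm_DSM}. Dividing by $\norm{u_{\infty}(\xh_n,\fd_n)}_{L^{2}(\mathbb{S}^1)}$ and invoking H\"older's inequality, precisely as in the earlier proof, normalizes that prefactor away and delivers the claimed quotient $|\Psi_2|/\max_{\fz\in\Omega}|\Psi_2|$.

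The one point demanding care is the validity of \eqref{RelationBessel} at the doubled argument. Because the summand now oscillates twice as rapidly in the observation direction over $\mathbb{S}^1$, the quadrature error in replacing the integral by the $N$-point sum is controlled only when $N$ is large relative to $2k_0\,\mathrm{diam}(\Omega)$ rather than $k_0\,\mathrm{diam}(\Omega)$; in effect the \enquote{sufficiently large $N$} hypothesis is twice as stringent here as for the standard DSM. I expect this to be the main obstacle to any fully quantitative (as opposed to purely asymptotic) version of the statement, and it is presumably why the result is formulated under the assumption that $N$ is sufficiently large.
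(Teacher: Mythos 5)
Your proposal is correct and follows exactly the route the paper intends: the paper in fact omits this proof entirely, stating only that it follows \enquote{the same path} as Theorem~\ref{thm_DSM}, and your computation (collapsing the phase to $\e^{-2\ci k_0\xh_n\cdot(\fr_m-\fz)}$ and invoking \eqref{RelationBessel} with $\fz\mapsto 2(\fr_m-\fz)$) is precisely that path carried out explicitly. Your closing remark that the quadrature approximation at the doubled argument requires a more stringent \enquote{sufficiently large $N$} is a sensible observation that goes slightly beyond what the paper records, but it does not change the argument.
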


As shown in \eqref{MDSM}  $\mathcal{I}_{\mathrm{MDSM}}^{\mathrm{mono}}(\fz)$ is proportional to $\abs{\BesselJ_{0}(2k_{0}|\fr_{m}-\fz|)}$ which, on the contrary of \eqref{result1}, has its maximum values at $\fz=\fr_m$, $m=1,2,\cdots,M$, which corresponds to the localization of the defects to be identified. It is interesting to observe that, according to \cite{dsm2d_ito1,Kang_mfDSM,dsm2d_farfield,InverseProblems-Kang-2017,PARK201863,PARK2018648}, the traditional DSM in the multi-static configuration is proportional to $\abs{\BesselJ_{0}(k_{0}|\fr_{m}-\fz|)}$. By comparing the oscillation property of $\BesselJ_{0}(k_0|x|)$ and $\BesselJ_{0}(2k_0|x|)$, it can be shown that $\mathcal{I}^{\mathrm{mono}}_{\mathrm{MDSM}}(\fz)$ will contain more artifacts than $\mathcal{I}_{\mathrm{DSM}}(\fz)$.

\section{Numerical experiments}\label{sec:5}
Numerical experiments are provided to support the results presented in Theorem \ref{thm_DSM} and \ref{thm_MDSM}. For the simulation, a fixed frequency $f=c_0/\lambda\approx\SI{749.481}{\MHz}$ where $c_{0}=1\slash\sqrt{\eps_0\mu_0}$ is the speed of light and $\lambda=\SI{0.4}{\meter}$ is considered. The number of incident and observation directions is set to $N=36$, the latter being uniformly distributed on $\mathbb{S}^1$ except  stated otherwise. We set $\Omg$ as a square of side length $4\lambda$ uniformly discretized with $50\times50$ square pixels. The far-field patterns $u_\infty(\xh_{n},\fd_{n})$ are generated via \texttt{FEKO} (EM simulation software), where
\[\xh_{n}=\left(\cos\frac{2\pi(n-1)}{N},\sin\frac{2\pi(n-1)}{N}\right)\]
with $N=36$. A \SI{20}{\dB} white Gaussian random noise is added to unperturbed data using \texttt{MATLAB} function \texttt{awgn} included in the signal processing package.

To compare the accuracy of the results  the Jaccard index \cite{jaccard} is used. It measures the similarity of two finite sample sets $A$ and $B$ and is defined as
\[{\mathcal{J}(A,B)}(\%):=\frac{|A\cap B|}{|A\cup B|}\times 100.\]
In our work, the Jaccard index is calculated by comparing $\mathcal{I}_{\mathrm{EXACT}}(\fz)$ with an index map $\mathcal{I}^{\kappa}(\fz)$ defined for a threshold $\kappa\in[0,1]$ as
\[
\mathcal{I}_{\mathrm{EXACT}}(\fz):=
\left\{
\begin{array}{lcl}
1&\mbox{for}&\fz\in\tau\\
0&\mbox{for}&\fz\in\mathbb{R}^{2}\backslash\overline{\tau},
\end{array}
\right.\]
and
\[\mathcal{I}^{\kappa}(\fz):=
\left\{
\begin{array}{ccl}
\mathcal{I}(\fz)&\mbox{if}&\mathcal{I}(\fz)\geq\kappa\\
0&\mbox{if}&\mathcal{I}(\fz)<\kappa,
\end{array}
\right.\]
respectively. Here, $\mathcal{I}(\fz)$ is either $\mathcal{I}_{\mathrm{DSM}}(\fz)$ (\ref{DSM2}),  $\mathcal{I}^{\mathrm{mono}}_{\mathrm{DSM}}(\fz)$ (\ref{MonoDSM}) or $\mathcal{I}^{\mathrm{mono}}_{\mathrm{MDSM}}(\fz)$ (\ref{MDSM}). Since for a fixed threshold,  the Jaccard index is made of a numerator which measures the common portion of the reconstructed and the exact images and a denominator which is the reunion of the portion of the reconstructed and the exact images, the more artefacts there are the higher the denominator is and the lower the Jaccard index is, so is the similarity between the two images

For each example the map of the indicator function is presented in the multi-static case (\ref{DSM2}) using the $N^2$ collected data and in the mono-static case  using the $N$ collected data thanks to either (\ref{MonoDSM}) or (\ref{MDSM}). 

\begin{example}[Small disks of same radii and permittivity]\label{EX:1}
	First, we consider small dielectric disks $\tau_m$ with  $\alpha_m\equiv0.075\lambda$ and $\eps_m\equiv5\eps_0$, $m=1,2,3$. The locations $\fr_m$ of $\tau_m$ are
	$\fr_1=(0.75\lambda,-0.75\lambda)$, $\fr_2=(-\lambda,-0.5\lambda)$, and $\fr_3=(-0.75\lambda,\lambda)$. According to the results in Fig.~\ref{Result1-2}, the location of $\fr_m\in\tau_m$ can be identified using the classical DSM indicator function $\mathcal{I}_{\mathrm{DSM}}(\fz)$ (\ref{DSM2}) when using the multi-static data (Fig.~\ref{Ex1ClassicalMultipleDSM}) but failed when using the mono-static ones (Fig.~\ref{Ex1ClassicalMonoDSM}) whereas more accurate locations are retrieved via the map of $\mathcal{I}_{\mathrm{MDSM}}^{\mathrm{mono}}(\fz)$ (Fig.~\ref{Ex1MonoMDSM}); however, due to the intrinsic lack of information of the monostatic configuration, only two of the three defects are properly identified.  As expected in the mono-static configuration a number of artifacts is also included in the map as discussed at the end of \S~\ref{sec:4}.
\end{example}

\begin{figure*}
	\centering
	\subfloat[\label{Ex1ClassicalMultipleDSM}$\mathcal{I}_{\mathrm{DSM}}(\fz)$]{\centering\includegraphics[width=.225\textwidth]{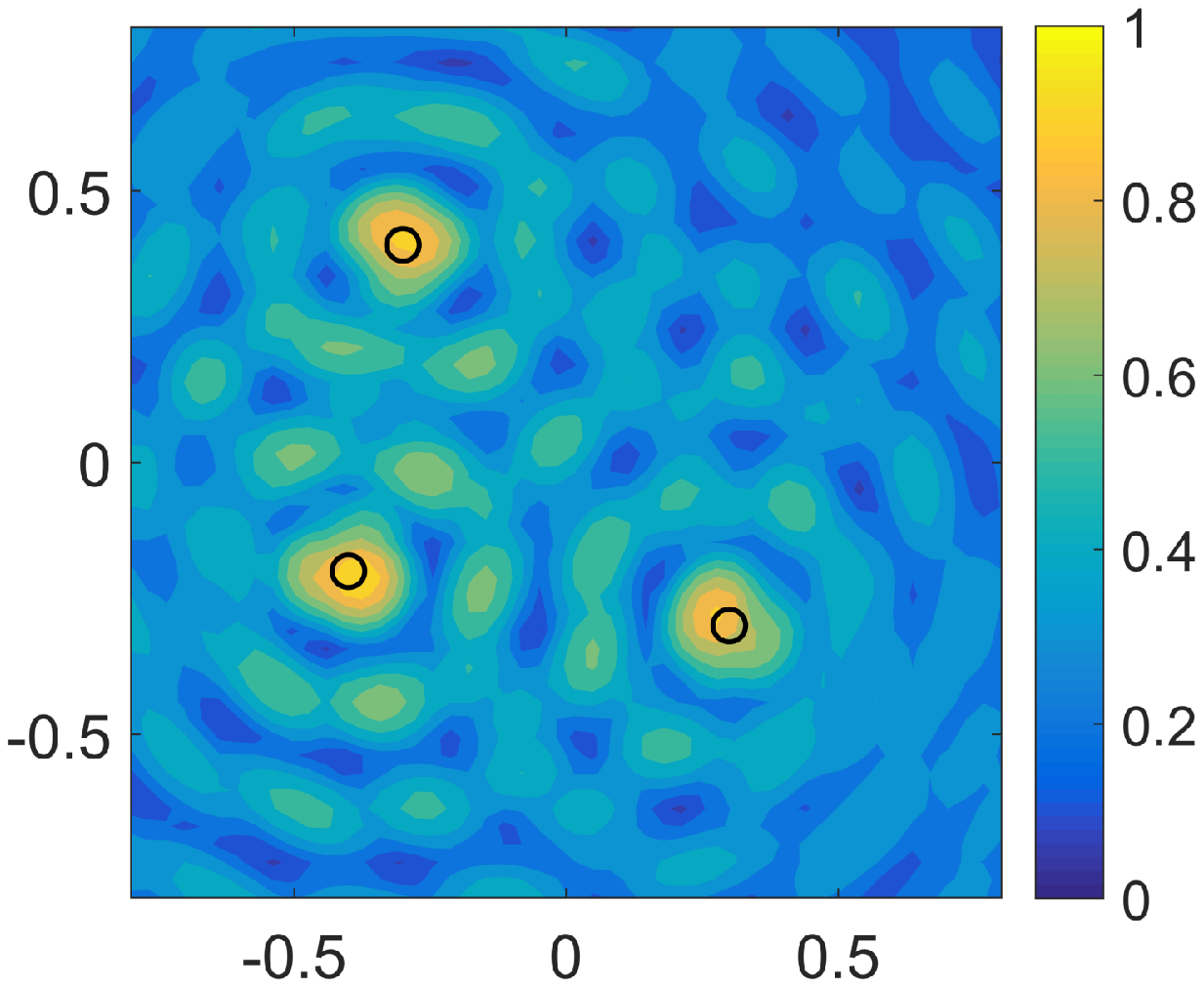}}\hspace*{0.01\textwidth}
	\subfloat[\label{Ex1ClassicalMonoDSM}$\mathcal{I}^{\mathrm{mono}}_{\mathrm{DSM}}(\fz)$]{\centering\includegraphics[width=.225\textwidth]{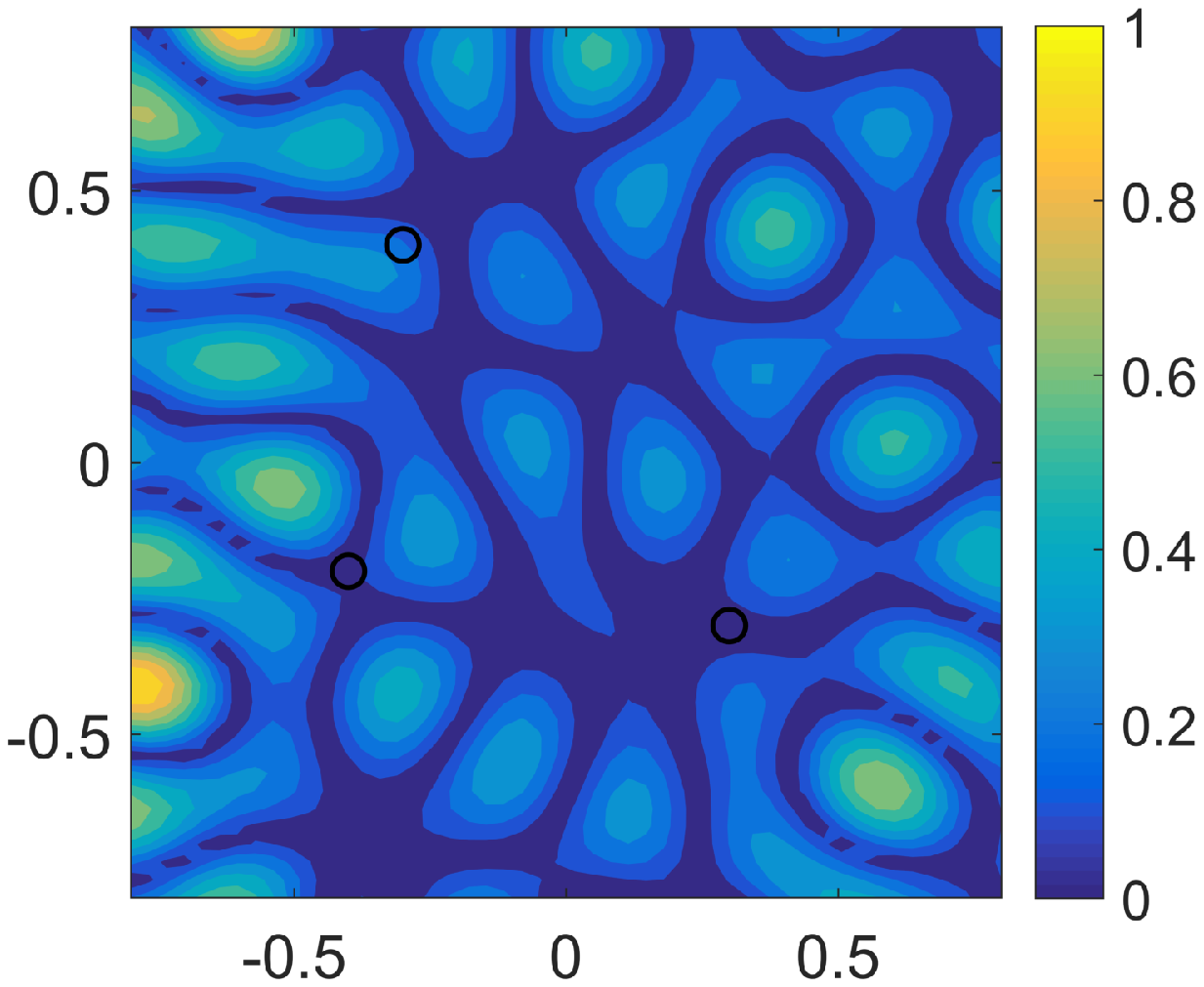}}\hspace*{0.01\textwidth}
	\subfloat[\label{Ex1MonoMDSM}$\mathcal{I}^{\mathrm{mono}}_{\mathrm{MDSM}}(\fz)$]{\centering\includegraphics[width=.225\textwidth]{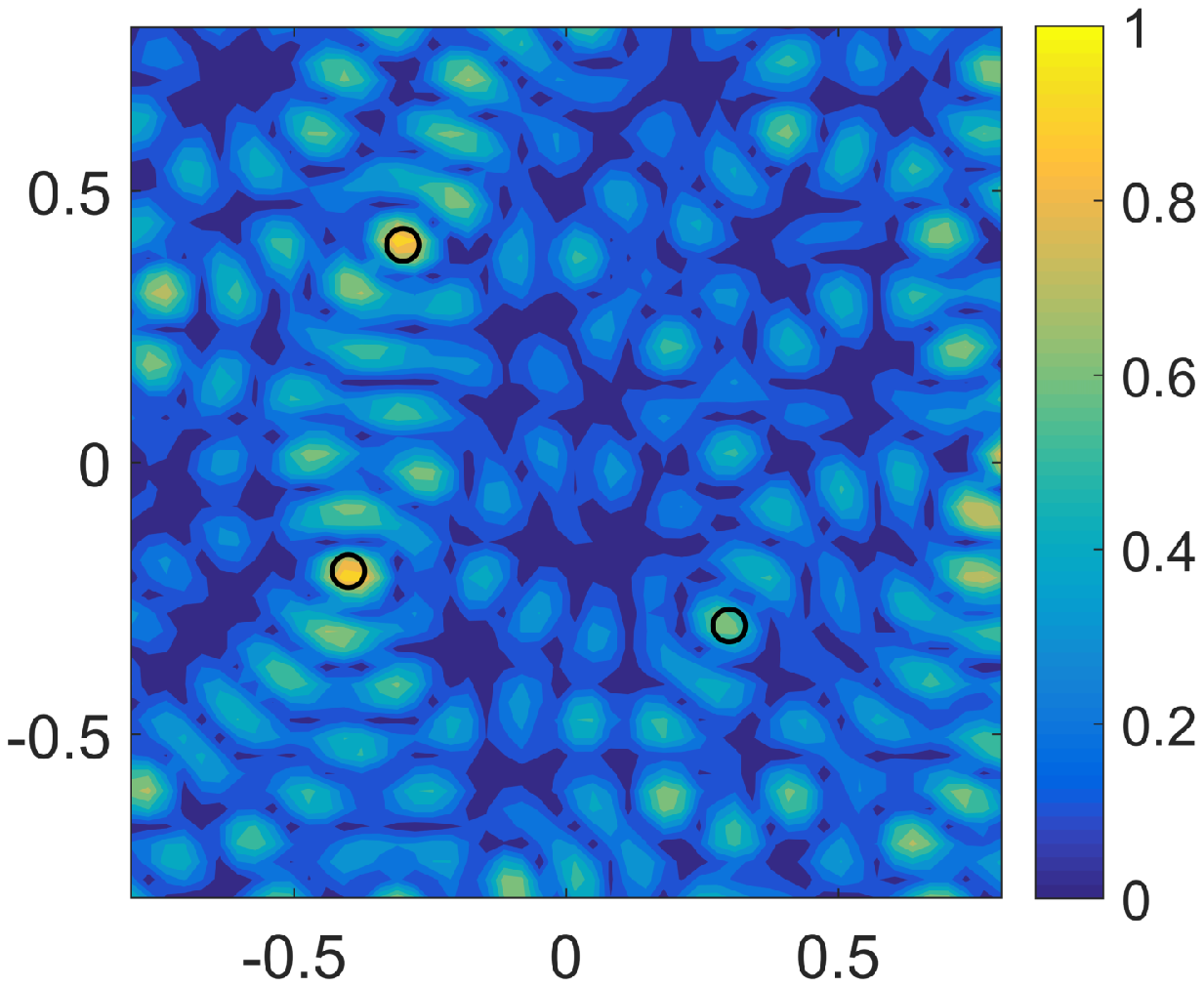}}\hspace*{0.01\textwidth}
	\subfloat[Jaccard index]{\centering\includegraphics[width=.225\textwidth]{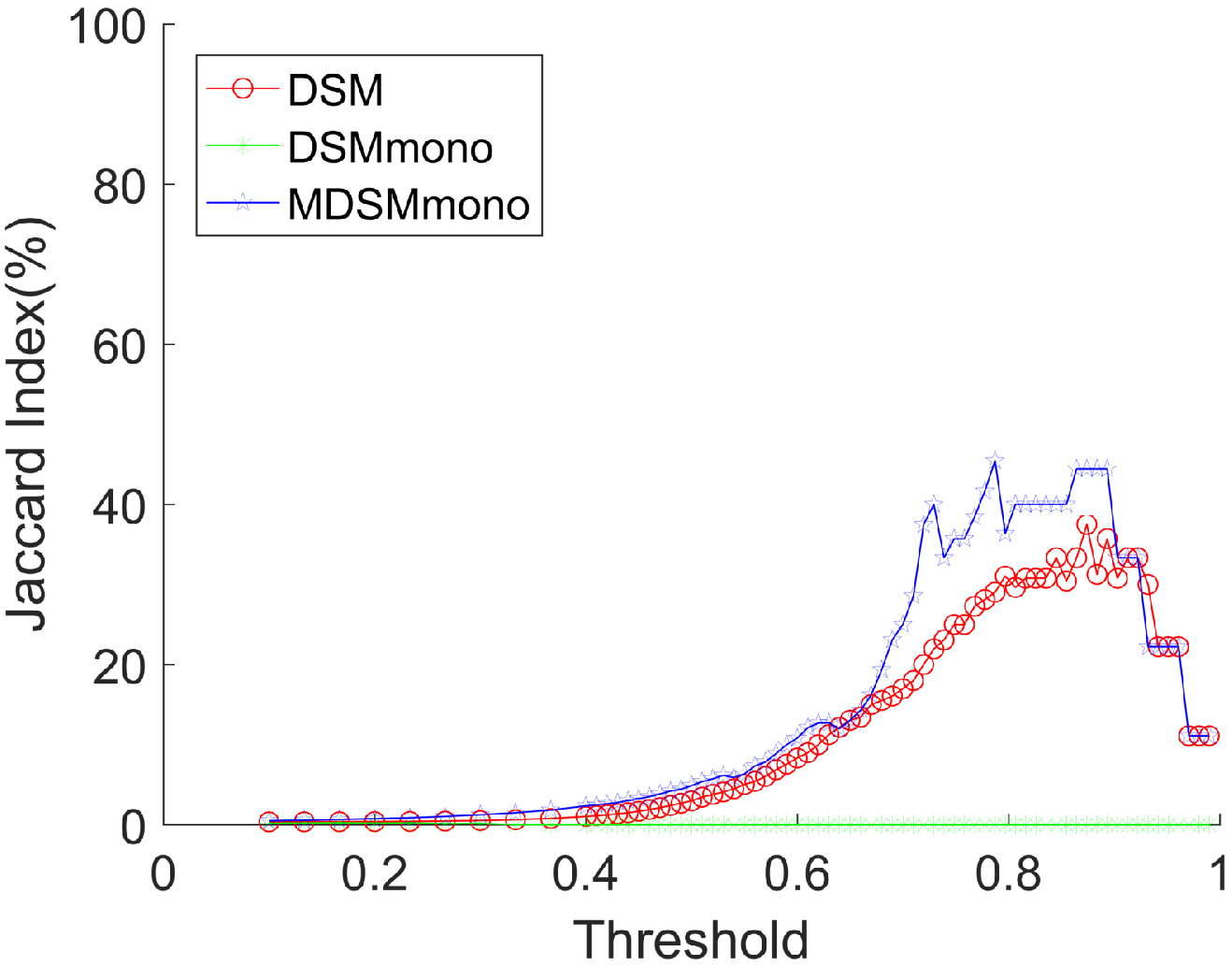}}
	\caption{\label{Result1-2}Simulation results of Example \ref{EX:1}}
\end{figure*}


\begin{example}[Large disk]\label{EX:2}
	In order to verify that our approach still behaves properly
	when the small obstacle hypothesis is no longer verified, we are considering the identification of an extended target designed as a single disk circle $\tau$ located at $\fr=(-0.75\lambda,-0.75\lambda)$ with radius $\alpha\equiv1\lambda$ and permittivity $\eps=5\eps_0$. Here also the shifting problem occurs in $\mathcal{I}_{\mathrm{DSM}}^{\mathrm{mono}}(\fz)$ as shown in Fig.~\ref{BigObstacleMonoDSM} whereas, when using  $\mathcal{I}_{\mathrm{MDSM}}^{\mathrm{mono}}(\fz)$, a better localisation of the center of target is obtained (Fig.~\ref{BigObstacleMonoModifiedDSM}) even if none of them is able to estimate neither the shape nor the size of the defect. As expected  better results are obtained when using the mutli-static data (Fig.~\ref{BigObstacleDSM}).
\end{example}
\begin{figure*}
	\centering
	\subfloat[\label{BigObstacleDSM}$\mathcal{I}_{\mathrm{DSM}}(\fz)$]{\centering\includegraphics[width=.225\textwidth]{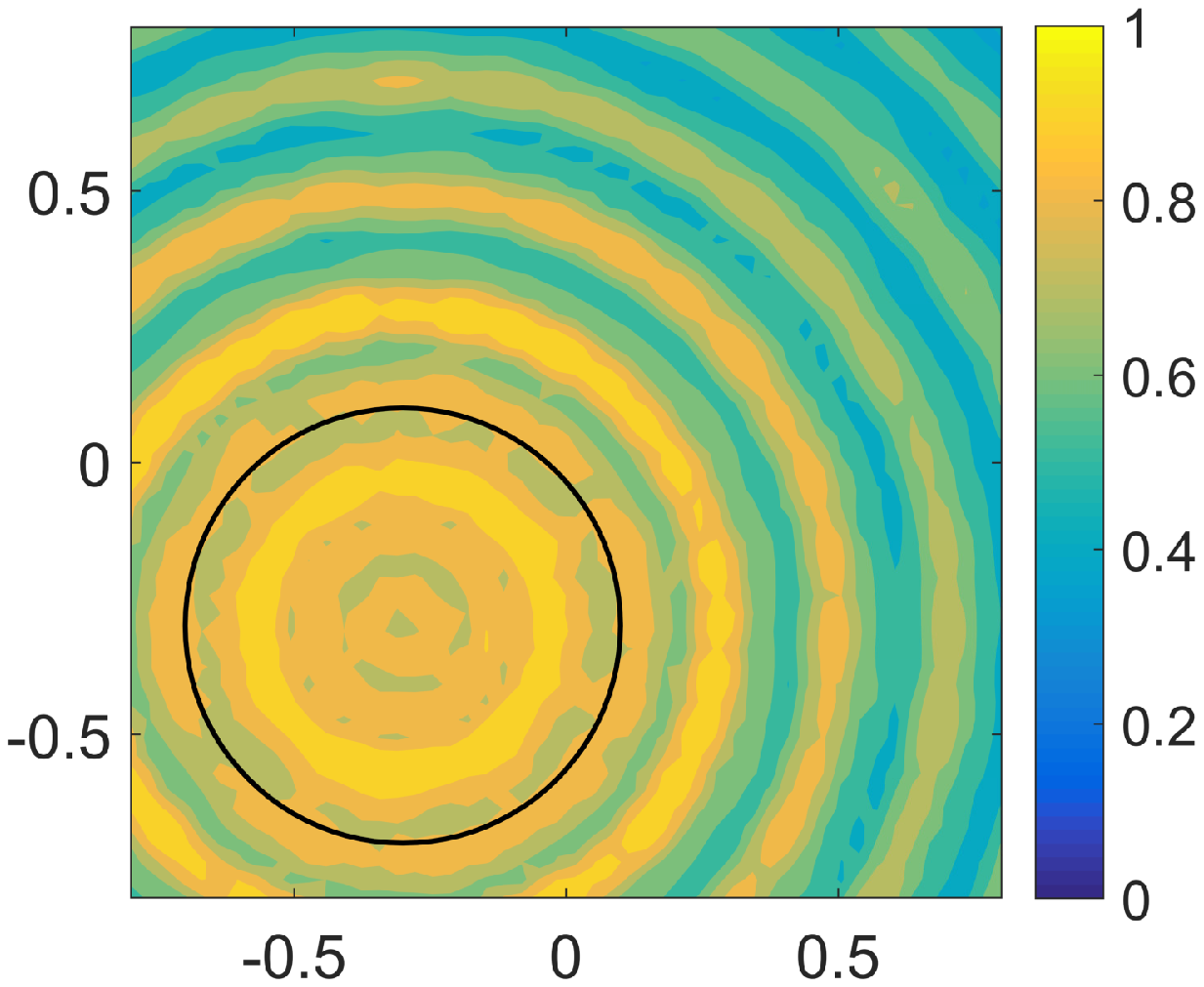}}\hspace*{0.01\textwidth}
	\subfloat[\label{BigObstacleMonoDSM}$\mathcal{I}^{\mathrm{mono}}_{\mathrm{DSM}}(\fz)$]{\centering\includegraphics[width=.225\textwidth]{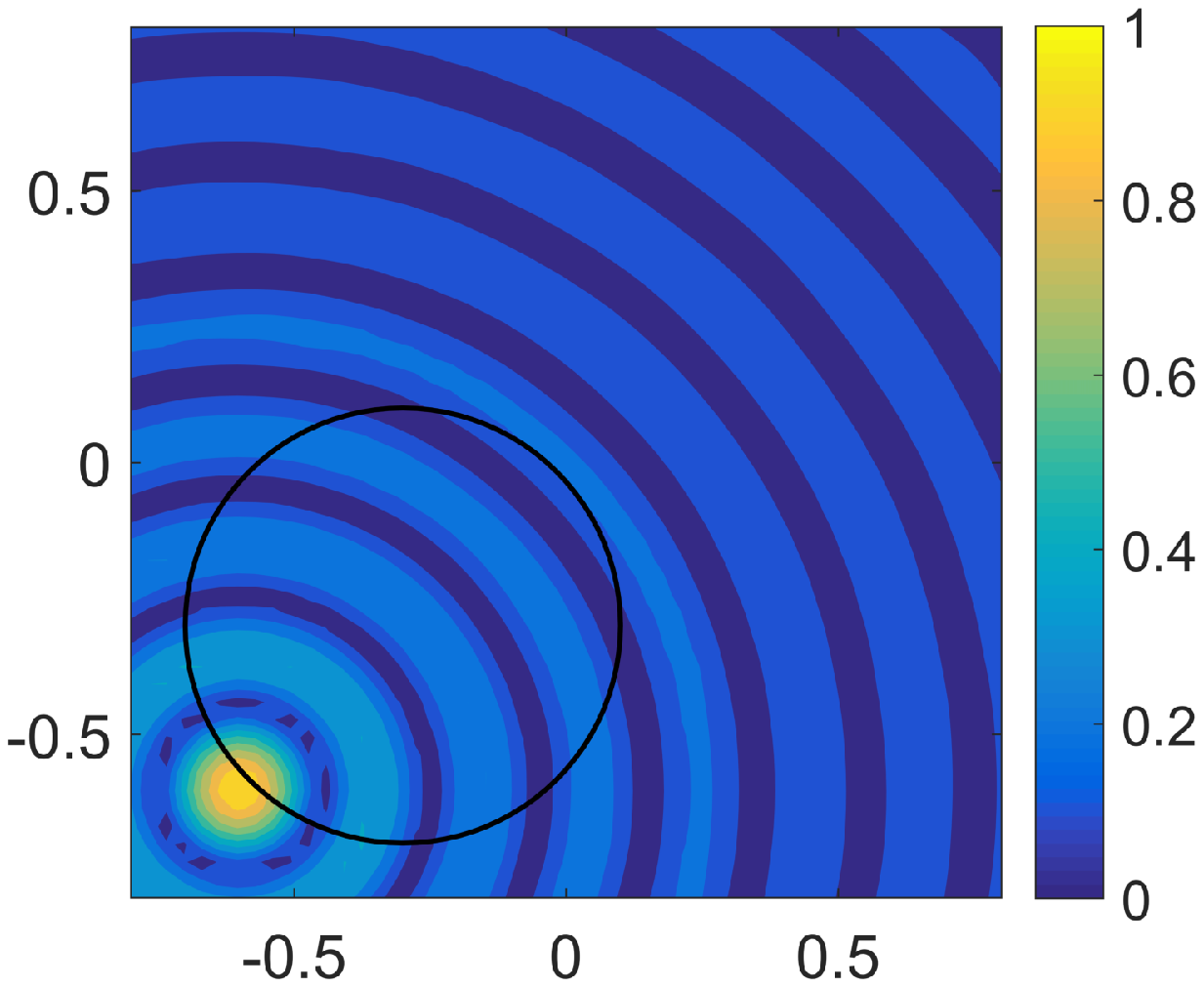}}\hspace*{0.01\textwidth}
	\subfloat[\label{BigObstacleMonoModifiedDSM}$\mathcal{I}^{\mathrm{mono}}_{\mathrm{MDSM}}(\fz)$]{\centering\includegraphics[width=.225\textwidth]{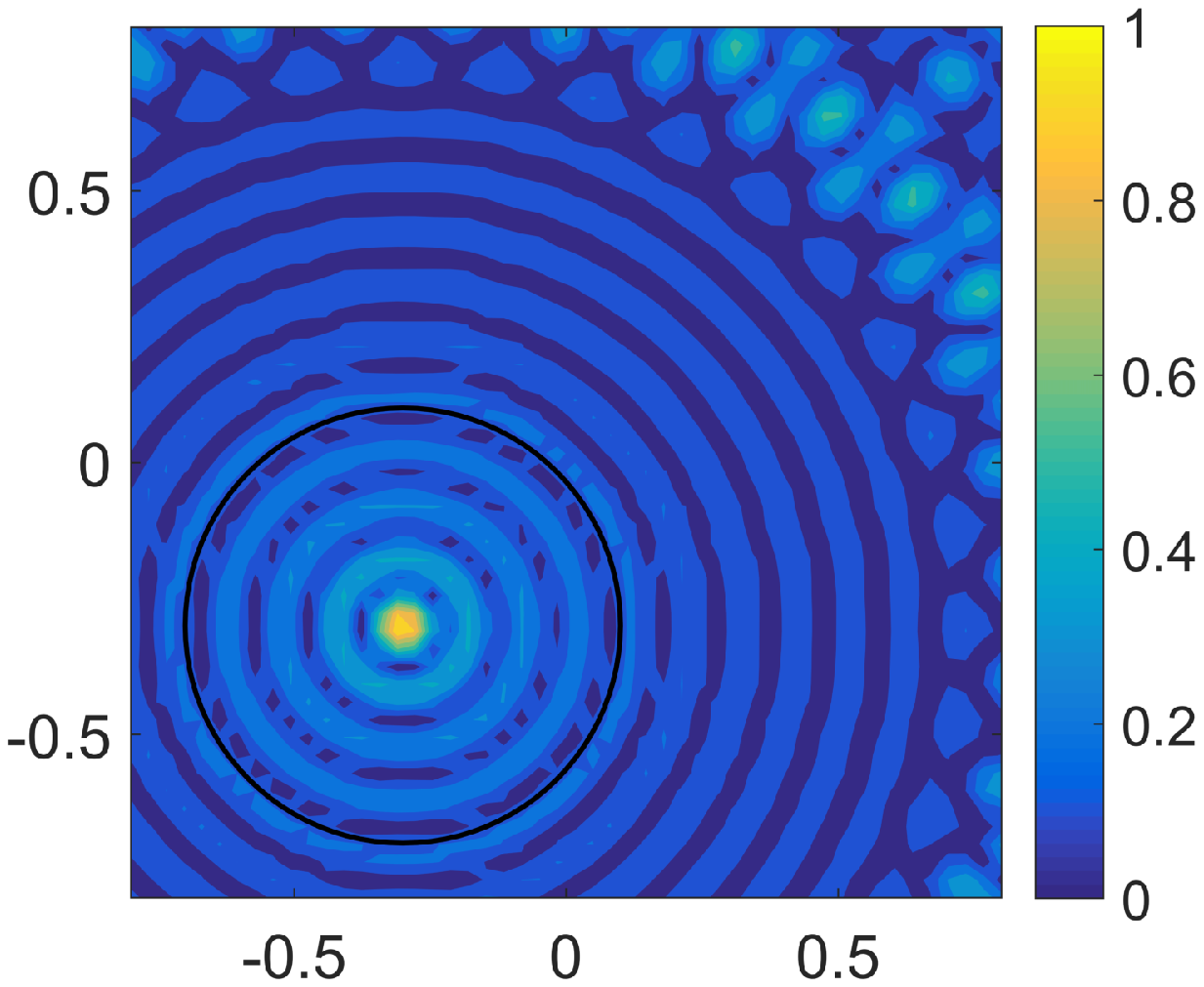}}\hspace*{0.01\textwidth}
	\subfloat[Jaccard index]{\centering\includegraphics[width=.225\textwidth]{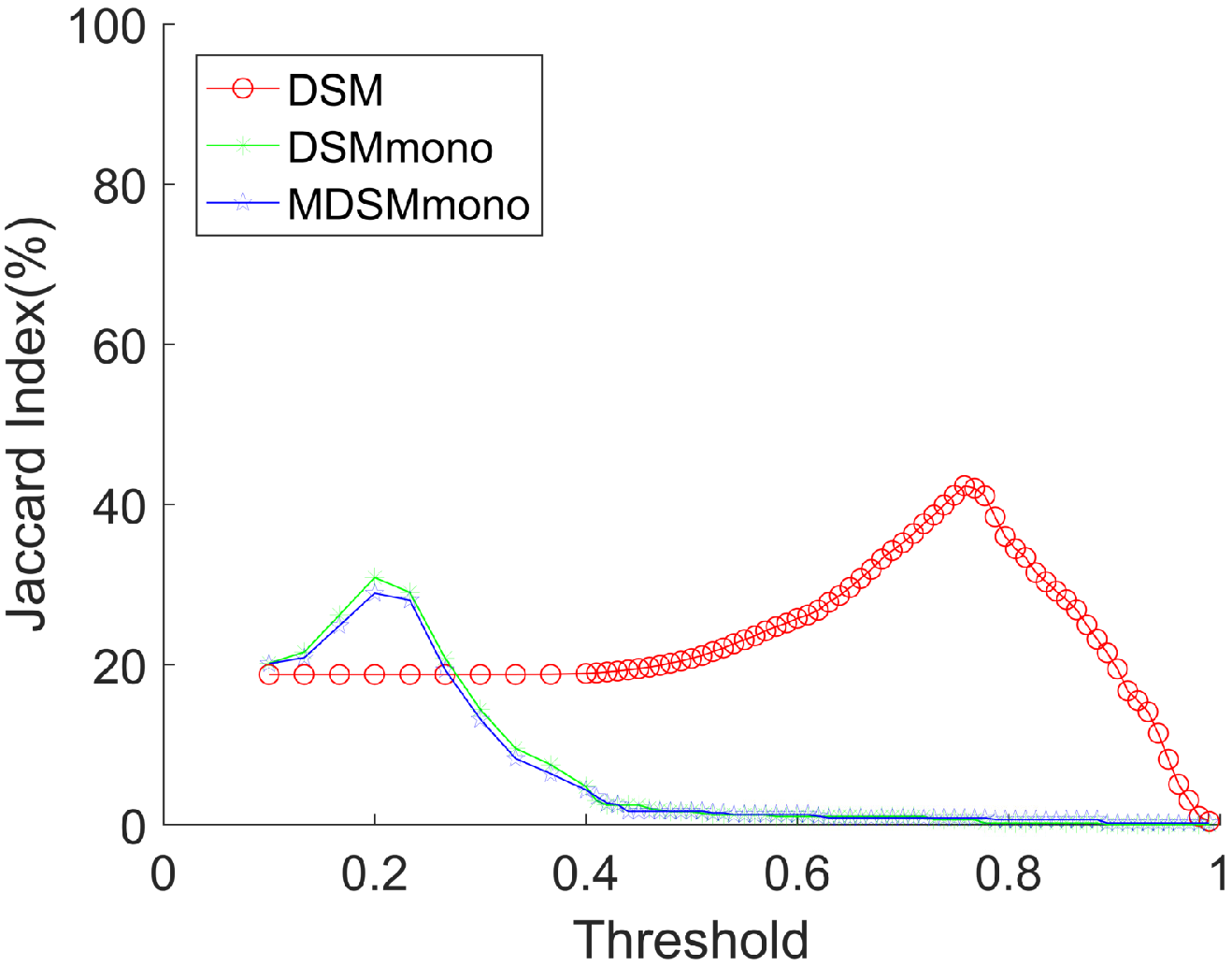}}
	\caption{\label{Result2-2}Simulation results of Example \ref{EX:2}}
\end{figure*}

\begin{example}[Limited view]\label{EX:3}
	Motivated by the application in GPR and SAR, we apply the designed indicator function $\mathcal{I}_{\mathrm{MDSM}}^{\mathrm{mono}}(\fz)$ when the range of incident and observation directions is limited. It is important to emphasize that due to the use of the far-field hypothesis such a configuration is not directly related to a GPR configuration, even if the influence of the limited aspect of the data is exemplified.
		 
	The configuration is the same as for Example \ref{EX:1} except the range of incident and observation directions which is limited to the upper half-circle with only $N=19$ collected far-field data. The simulation results are displayed in Fig. \ref{Result1-3}. As for the two previous examples the results using the multi-static scattered field provide the best localisations (Fig.~\ref{LimitedViewEx1ClassicalMultipleDSM}) whereas the mono-static case using the classical DSM does not provide any good results since the shifting problem still occurs (Fig.~\ref{LimitedViewEx1ClassicalMonoDSM}). As expected the mono-static modified DSM is able to localize two obstacles among the three   (Fig.~\ref{LimitedViewEx1MonoMDSM}) as it was the case with full-view aperture (Fig.~\ref{Ex1MonoMDSM}).
\end{example}


\begin{figure*}
	\centering
	\subfloat[\label{LimitedViewEx1ClassicalMultipleDSM}Map of $\mathcal{I}_{\mathrm{DSM}}(\fz)$]{\centering \includegraphics[width=.225\textwidth]{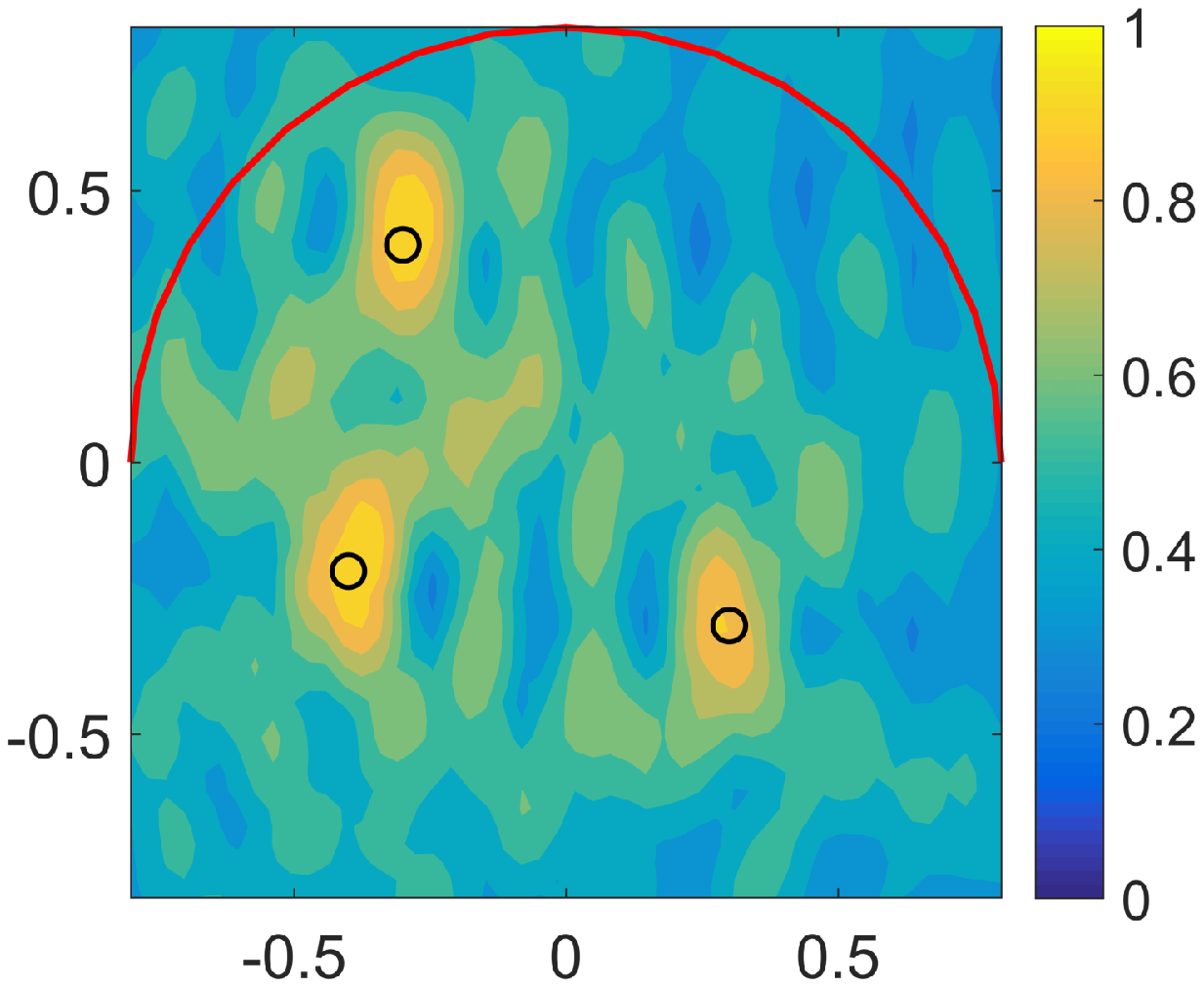}}\hspace*{0.01\textwidth}
	\subfloat[\label{LimitedViewEx1ClassicalMonoDSM}Map of $\mathcal{I}^{\mathrm{mono}}_{\mathrm{DSM}}(\fz)$]{\centering\includegraphics[width=.225\textwidth]{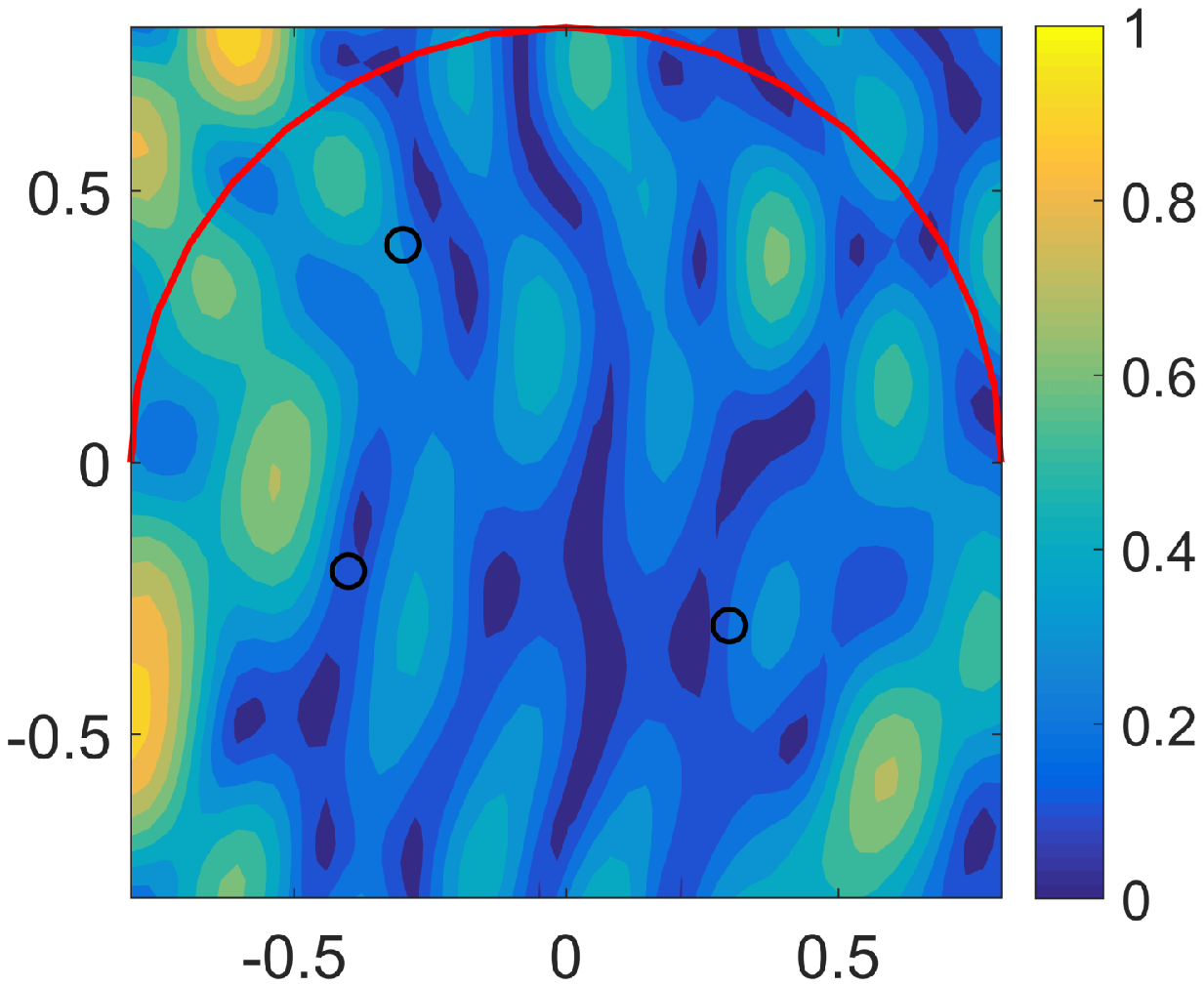}}\hspace*{0.01\textwidth}
	\subfloat[\label{LimitedViewEx1MonoMDSM}Map of $\mathcal{I}^{\mathrm{mono}}_{\mathrm{MDSM}}(\fz)$]{\centering\includegraphics[width=.225\textwidth]{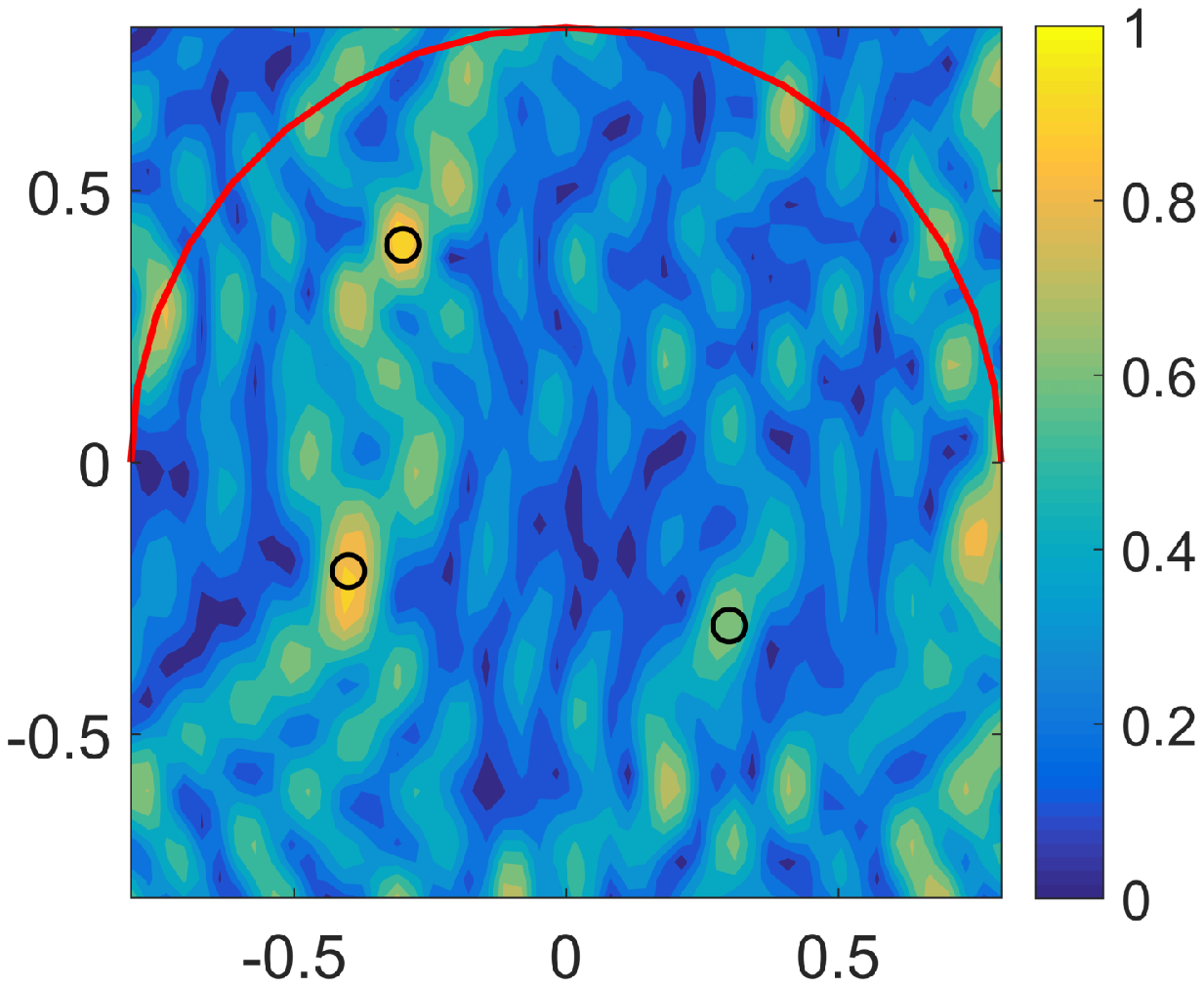}}\hspace*{0.01\textwidth}
	\subfloat[\label{LimitedViewEx1Jaccard}Jaccard index]{\includegraphics[width=.225\textwidth]{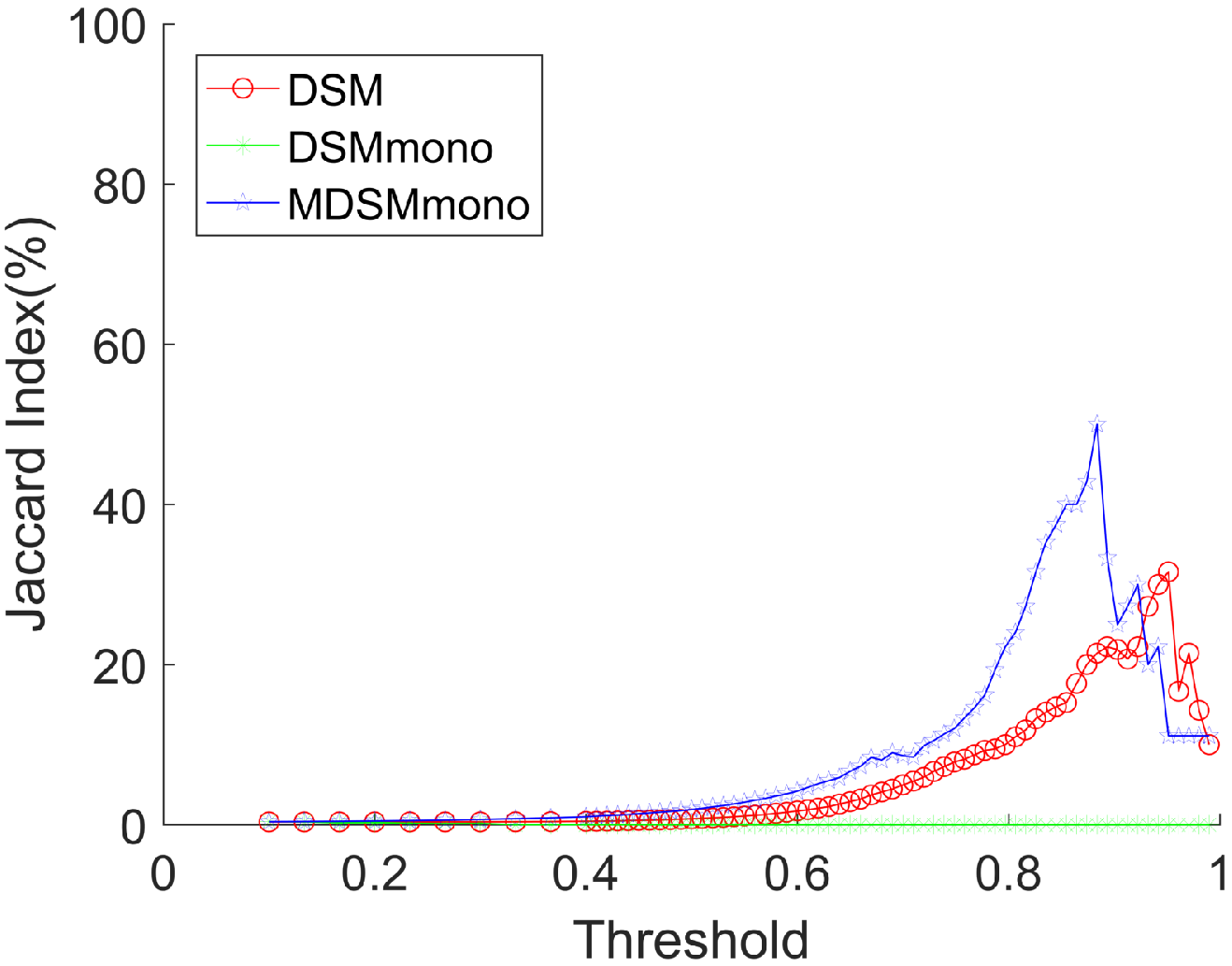}}
	\caption{\label{Result1-3}Simulation results in Example \ref{EX:3}. Red-colored solid-line describes the limited range of incident and observation directions.}
\end{figure*}

\section{Conclusion and perspective}\label{sec:6}
In this study, the application of DSM in the mono-static configuration for finding the location of small targets is considered in a 2D scalar configuration. Thanks to the use of the asymptotic expansion formula in the presence of small inhomogeneities and the far-field hypothesis, the mathematical structure of the indicator function of the traditional DSM is established and the reason for which it fails to image the defects is clearly identified. To overcome this miss-localization of the defects a modified DSM (MDSM) is proposed and its efficiency is theoretically shown. Numerical simulations are provided to support our theoretical results for various obstacles. 

Nevertheless, some improvements are still required as for, as an example, the near-field case for which the provided equations are no longer correct, while the multi-frequency version is also of interest and should be treated. 


%

\section*{Acknowledgment}

The authors would like to acknowledge D.~Lesselier for his valuable comments. W.-K.~Park was supported by the Basic Science Research Program of the National Research Foundation of Korea (NRF) funded by the Ministry of Education (No. NRF-2017R1D1A1A09000547).

\bibliographystyle{IEEEtran}
\bibliography{IEEEabrv,ReferencesURSI2}

%
%
%




\end{document}